\documentclass[10pt]{amsart}
\usepackage{amsmath,amsthm,amssymb, amscd, amsfonts}

\theoremstyle{plain}
\newtheorem{theorem}{Theorem}[section]
\newtheorem{corollary}[theorem]{Corollary}

\newtheorem{proposition}[theorem]{Proposition}
\newtheorem{lemma}[theorem]{Lemma}

\theoremstyle{definition}

\theoremstyle{remark}
\newtheorem{remark}[theorem]{Remark}

\numberwithin{equation}{section}\theoremstyle{plain}

\newcommand{\C}{{\mathcal C}}

\newcommand\id{\operatorname{id}}

\newcommand\Aut{\operatorname{Aut}}
\newcommand\Out{\operatorname{Out}}

\newcommand\cop{\operatorname{cop}}

\newcommand\Opext{\operatorname{Opext}}

\newcommand\res{\operatorname{res}}
\newcommand\Ind{\operatorname{Ind}}
\newcommand\op{\operatorname{op}}

\newcommand\Rep{\operatorname{Rep}}

\newcommand\vect{\operatorname{Vec}}

\begin{document}
\title[]{On quasitriangular structures in Hopf algebras arising from exact group factorizations}
\author{Sonia Natale}
\address{Facultad de Matem\'atica, Astronom\'\i a y F\'\i sica.
Universidad Nacional de C\'ordoba. CIEM -- CONICET. (5000) Ciudad
Universitaria. C\'ordoba, Argentina}
\email{natale@famaf.unc.edu.ar
\newline \indent \emph{URL:}\/ http://www.famaf.unc.edu.ar/$\sim$natale}
\begin{abstract} We show that  bicrossed product Hopf algebras arising
from exact factorizations in almost simple finite groups, so in
particular, in simple and symmetric groups, admit no
quasitriangular structure.
 \end{abstract}
\thanks{Partially supported by  CONICET,
SeCYT (UNC) and  FAMAF (Rep\'ublica Argentina).}

\subjclass{16T05}

\date{September 1, 2010.}

\maketitle

\section{Introduction and main result}

We shall work over an algebraically closed field $k$ of
characteristic zero. Let $H$ be a finite dimensional  Hopf
algebra. The category $\Rep H$ of its finite dimensional
representations is a \emph{finite tensor category} with tensor
product induced by the comultiplication of $H$ and unit object
$k$. When $H$ is semisimple, $\Rep H$ is a fusion category over
$k$, that is, a semisimple finite tensor category over $k$.

Tensor categories of the form $\Rep H$ are characterized by being endowed with a fiber functor to the category $\vect_k$ of vector
spaces over $k$. The forgetful functor $\Rep H \to \vect_k$ is a
fiber functor and other fiber functors correspond to
\emph{twisting} the comultiplication of $H$.

\medbreak An important class of Hopf algebras over $k$ is that of
quasitriangular Hopf algebras, introduced by Drinfeld in
\cite{drinfeld}. These are Hopf algebras $H$ endowed with a so
called $R$-matrix $R \in H \otimes H$. When $(H, R)$ is
quasitriangular, the category $\Rep H$ is a braided category over
$k$ with respect to the braiding $c: U \otimes V \to V \otimes U$
induced by the action of the $R$-matrix, for representations $U,
V \in \Rep H$.

The main feature of quasitriangular Hopf algebras is that they
give rise to universal solutions of the Quantum Yang-Baxter
Equation. They are also related to the construction of
invariants of knots and $3$-manifolds. See  \cite{kassel}.

\medbreak In this paper we consider the question of the existence
of quasitriangular structures in a class of semisimple Hopf
algebras arising from exact factorizations of finite groups. These
are amongst the first examples of non-commutative
non-cocommutative Hopf algebras. They were first
discovered and studied by G. I. Kac in the late 60's.

Suppose that $G = F\Gamma$ is an exact factorization of the finite
group $G$, into its subgroups $\Gamma$ and $F$, such that $F \cap \Gamma = 1$.  Equivalently, $F$
and $\Gamma$ form a matched pair of finite groups with the actions
$\vartriangleleft : \Gamma \times F \to \Gamma$,
$\vartriangleright : \Gamma \times F \to F$, defined by \begin{equation*}sx = (x
\vartriangleleft s)(x \vartriangleright s),\end{equation*} $x \in F$, $s \in
\Gamma$. We shall call an exact factorization \emph{proper} if $F$ and $\Gamma$ are proper subgroups of $G$.

Associated to this exact factorization and appropriate cohomology
data $\sigma$ and $\tau$, there is a semisimple bicrossed product
Hopf algebra $H = k^\Gamma \, {}^{\tau}\#_{\sigma}kF$. The Hopf
algebra $H$ fits canonically into an \emph{abelian} exact sequence
of Hopf algebras \begin{equation}\label{sec-abel}k \to k^{\Gamma}
\to H \to kF \to k.\end{equation} Moreover, every Hopf algebra
fitting into such exact sequence can be described in this way.
This gives a bijective correspondence between the equivalence
classes of  Hopf algebra extensions \eqref{sec-abel} and a certain
abelian group $\Opext (k^\Gamma, kF)$ associated to the matched
pair $(F, \Gamma)$. We refer the reader to \cite{ma-ext,
ma-newdir} for the notion of abelian exact sequence and the
cohomology theory underlying it.

In the case of bicrossed products $H = k^\Gamma \# kF$, the
classification of so-called \emph{positive} quasitriangular
structures in $H$ appears in the paper \cite{LYZ}. These are
related to set-theoretical solutions of Yang-Baxter equation.

\medbreak The main result of this note is the following.

\begin{theorem}\label{main} Let $G = F\Gamma$ be a proper exact factorization of the finite group $G$.
Let also $H$ be a Hopf algebra as in \eqref{sec-abel} associated to this factorization. Assume $G$ is an almost simple group. Then $H$ admits no quasitriangular structure.
 \end{theorem}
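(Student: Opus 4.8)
The plan is to reformulate the existence of an $R$-matrix categorically and then defeat it using the very rigid normal-subgroup lattice of an almost simple group. Suppose $H$ carried a quasitriangular structure. A quasitriangular structure on $H$ is exactly a braiding $c$ on the fusion category $\C := \Rep H$, and since the factorization is proper, $H$ is genuinely non-cocommutative, so $c$ is not the trivial symmetric braiding. Given $c$, there is a canonical fully faithful braided tensor functor $\C \to Z(\C)$ splitting the forgetful functor (sending $X$ to $X$ equipped with the half-braiding $c_{-,X}$); write $\bar\C \subseteq Z(\C)$ for its image. Then $\bar\C$ is a fusion subcategory which, being tensor equivalent to $\C=\Rep H$, admits a fiber functor and satisfies $\FPdim\bar\C = \FPdim\C = \dim H = |G|$ (the last equalities because $H$ is semisimple).

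The second step is to identify the Drinfeld center $Z(\C)$. Because $H$ arises from the exact factorization $G=F\Gamma$, the category $\Rep H$ is group-theoretical and Morita equivalent to $\vect^{\omega}_G$ for a class $\omega\in H^3(G,k^\times)$ determined by $\sigma$ and $\tau$. Morita equivalent tensor categories have braided equivalent centers, so $Z(\C)\cong Z(\vect^\omega_G)\cong \Rep D^\omega(G)$, a nondegenerate (modular) braided fusion category of Frobenius--Perron dimension $|G|^2$. Thus $\bar\C$ is realized as a fusion subcategory of $\Rep D^\omega(G)$ of dimension exactly $|G| = \sqrt{\FPdim Z(\C)}$ that carries a fiber functor. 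I note that $\bar\C$ need \emph{not} be symmetric (its braiding is inherited from $c$), so it is a genuine fusion subcategory of the modular category $\Rep D^\omega(G)$, not a priori a Lagrangian one.

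The final step is the group theory. I would invoke the classification of fusion subcategories of $\Rep D^\omega(G)$ (Naidu--Nikshych--Witherspoon), by which each such subcategory is encoded by a pair of mutually centralizing normal subgroups of $G$ together with compatible bicharacter and cocycle data, the Frobenius--Perron dimension being a fixed group-theoretic function of that data. Almost simplicity now enters decisively: the socle $S$ of $G$ is the unique minimal normal subgroup, it is nonabelian simple, and $C_G(S)=1$; hence every nontrivial normal subgroup of $G$ contains $S$, and any two nontrivial normal subgroups fail to centralize one another since $[S,S]=S\neq 1$. This collapses the admissible data to that of a single normal subgroup, and I would then check that no resulting subcategory of dimension exactly $|G|$ admits a fiber functor unless one of $F,\Gamma$ is trivial or equal to $G$, that is, unless the factorization is improper. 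This contradicts the hypothesis and proves the theorem.

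The main obstacle is this last step: matching the subcategory classification against \emph{both} the dimension constraint $\FPdim\bar\C=|G|$ and the fiber-functor constraint, while carrying the twist $\omega$ coming from $\sigma,\tau$. The twist does not enlarge the normal-subgroup lattice of $G$, so the ``mutually centralizing normal subgroups'' obstruction survives untouched; the delicate part is verifying that it is the fiber-functor requirement, and not merely the dimension, that is incompatible with the collapsed lattice for \emph{every} proper factorization. A useful reformulation of the same obstruction, which I expect to streamline the bookkeeping, is that $\Rep D^\omega(G)$ is a prime modular category when $G$ is almost simple, so it cannot accommodate the braided-section image $\bar\C$ of dimension $|G|$ forced by a proper factorization; ruling out the degenerate case in which $\bar\C$ interacts nontrivially with the two canonical Lagrangian subcategories of $Z(\vect^\omega_G)$ is the point that requires care.
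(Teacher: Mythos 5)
Your first two steps coincide with the paper's route: the braided embedding $\Rep H \hookrightarrow \mathcal Z(\Rep H) \simeq \Rep D^{\omega}(G)$ is exactly Corollary \ref{cor-qt} (via Theorem \ref{kac}), and the observation that almost simplicity forbids two nontrivial normal subgroups from centralizing each other (every nontrivial normal subgroup contains the nonabelian socle $N$, Lemma \ref{normal-sub}) is exactly how the paper collapses the Naidu--Nikshych--Witherspoon triples. But there is a genuine gap in your final step, and it is aimed at a false target. Once the centralizing condition forces one of $K_1, K_2$ to be trivial, the dimension formula $\dim \C(K_1,K_2,B) = |K_1|\,[G:K_2]$ (which you never invoke) shows that the \emph{only} subcategory of dimension $|G|$ is $\C(1,1,1) \simeq \Rep G$. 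This subcategory \emph{does} admit a fiber functor, so the fiber-functor obstruction you hope to exploit does not exist. Your primeness reformulation fails for the same reason: $\Rep G \subseteq \Rep D^{\omega}(G)$ is a symmetric, hence totally degenerate, subcategory of dimension $|G|$, and primeness of a modular category only excludes nontrivial \emph{nondegenerate} subcategories; it happily accommodates the image $\bar{\C}$, which (as you yourself note) need not be nondegenerate. So neither of your proposed finishing moves can produce a contradiction, and indeed no contradiction is available at the level of the center alone.

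The actual contradiction in the paper comes from the extension structure of $H$, which your argument discards after the first paragraph. Since $\bar{\C} \simeq \Rep H$ must equal $\C(1,1,1) \simeq \Rep G$, the Hopf algebra $H$ is twist equivalent to $kG$. Twisting preserves Hopf algebra quotients, so the quotient $H \to kF$ from the abelian exact sequence yields a surjective group homomorphism $G \to T$ with $1 < |T| = |F| < |G|$ (properness of the factorization is used precisely here). Its kernel contains the socle $N$, so $T$ embeds into a quotient of $G/N \leq \Out(N)$, which is tiny. Moreover, by Remark \ref{div-f}, every irreducible representation of $H$ --- hence of $G$, hence (by Clifford theory) of $N$ --- has degree dividing $|F| = |T|$. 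The paper then rules this out by a case analysis over the classification of finite simple groups: for $N$ of Lie type the Steinberg representation has degree equal to the full $p$-part of $|N|$, exceeding $|\Out(N)|$; for alternating and sporadic $N$ one uses $|\Out(N)| \leq 4$ against explicit large irreducible degrees. This divisibility argument and the CFSG case analysis constitute the heart of the proof of Theorem \ref{alm-simp}, and nothing in your proposal substitutes for them; the sentence ``check that no resulting subcategory of dimension exactly $|G|$ admits a fiber functor unless the factorization is improper'' asserts something false as stated, since the factorization data $F, \Gamma$ does not enter the subcategory lattice of $\Rep D^{\omega}(G)$ at all --- it re-enters only through the quotient $H \to kF$.
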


Recall that a finite group $G$ is called \emph{almost simple} if
$G$ is isomorphic to a group $\tilde G$ such that $N \leq \tilde G
\leq \Aut N$, for some non-abelian finite simple group $N$. In
particular, the following are almost simple groups:

(a) $G$ a finite simple group, and

(b) $G = \mathbb S_n$ is the symmetric group on $n$ symbols, $n
\geq 5$.

Hence the theorem implies that any bicrossed product Hopf algebra associated to an exact factorization in any group in one of the classes (a) or (b) admits no quasitriangular structure.

\medbreak There is a vast literature on the classification of
factorizations in finite and, in particular, finite simple and
almost simple groups. Group factorizations are related to the
problem of determining the finite primitive permutation groups
with a regular subgroup.  We mention, in particular, the
references \cite{wiegold-williamson} for the cases of alternating
and symmetric groups,  \cite{giudici} for a determination of all
exact factorizations in sporadic simple groups, and \cite{LPS-1,
LPS} for all the maximal factorizations of simple and almost
simple groups.

\medbreak Theorem \ref{main} will be proved in Sections
\ref{simple} and \ref{almost-simple}.  See Theorems \ref{simp} and \ref{alm-simp}. We treat separately the case of simple groups in Section \ref{simple}.
In the case of general almost simple groups, we make use in our proof of the classification of finite simple groups.
We point out that this is not needed in the particular case of symmetric groups, which is stated in Proposition \ref{sym-2}.

Our proof relies on the classification of full fusion
subcategories of the categories of representations of (twisted)
quantum doubles of a finite group, due to D. Naidu, D. Nikshych
and S. Witherspoon \cite{NNW}; indeed, the results in \cite{NNW}
amount to a description of all group-theoretical braided fusion
categories.

The Drinfeld double of a Hopf algebra $H$ as in \eqref{sec-abel}
is twist equivalent to a twisted Drinfeld double of $G$, as
introduced by Dijkgraaf, Pasquier and Roche \cite{DPR}, in view of
the description of the Drinfeld double of $H$ given in \cite{MBG,
gp-ttic}. On the other hand, a quasitriangular structure in $H$
provides a canonical embedding of $\Rep H$ into $\Rep D(H)$; so
that the category $\Rep H$ must coincide with one of those
appearing in the classification result of \cite{NNW}. This would
also be a fruitful approach in classifying quasitriangular
structures in any given abelian extension or, furthermore, in a
group-theoretical Hopf algebra in general.

\medbreak \textbf{Acknowledgement.} These results are the outgrowth of some questions that were
brought to the author's attention by S. Montgomery. The author thanks her for sharing her insights and for interesting references and discussions.

\section{Preliminaries}

\subsection{Finite dimensional Hopf algebras and their representations}

Let $H$ be a finite dimensional Hopf algebra over $k$. Recall that a \emph{twist} in $H$ is an invertible element $J\in
H\otimes H$ such that
\begin{align}
\label{deltaj}(\Delta\otimes \id)(J)(J\otimes 1)=&(\id\otimes\Delta)(J)(1\otimes J),\\
(\epsilon\otimes \id)(J)=&(\id\otimes\epsilon)(J)=1.
\end{align}
If $J\in H\otimes H$ is a twist, then there is a new Hopf algebra
$(H^{J}, \Delta^J, \mathcal S^J)$, where $H^J = H$ as algebras,
with comultiplication $\Delta^J(h)=J^{-1}\Delta(h)J$, and antipode
$\mathcal S^J(h)= v^{-1}\mathcal S(h)v$, $\forall h\in H$, with $v=m
(\mathcal S\otimes \id)(J)$.

The Hopf algebras $H$ and $H'$ are called \emph{twist equivalent} if
$H'\simeq H^J$. It is known that $H$ and $H'$ are twist equivalent
if and only if $\Rep H \simeq \Rep H'$ as tensor categories
\cite{scha}. Therefore, properties like being semisimple or
(quasi)triangular, are preserved under twisting deformations.

\begin{remark}\label{tens-subc} Full tensor subcategories of $\Rep H$
correspond, via tannaka duality arguments, to quotient Hopf
algebras of $H$. That is, every full tensor subcategory of $\Rep
H$ is of the form $\Rep L$ where $\pi: H \to L$ is a quotient Hopf
algebra and the inclusion $\Rep L \to \Rep H$ is given by
restriction of representations along $\pi$.\end{remark}

Suppose $H$ and $H'$ are twist equivalent. Then $\Rep H \simeq
\Rep H'$ as tensor categories and therefore there is a bijective
correspondence between quotient Hopf algebras of $H$ and $H'$.

Indeed, if $\pi:H \to L$ is a Hopf algebra map and $J\in H\otimes
H$ is a twist, then $(\pi\otimes\pi)(J)$ is a twist for $L$ and
$\pi: H^J\to L^{(\pi\otimes \pi)(J)}$ is a Hopf algebra map. This
establishes the correspondence mentioned above.

\subsection{Quasitriangular structures}

Recall that a quasitriangular structure in Hopf algebra $H$
consists of the data of an invertible element $R \in H \otimes H$,
called an \emph{$R$-matrix}, satisfying:

\begin{itemize}\item[(QT1)] $(\Delta \otimes \id)(R) = R_{13}R_{23}$.

\item[(QT2)] $(\epsilon \otimes \id)(R) = 1$.

\item[(QT3)] $(\id \otimes \Delta)(R) = R_{13}R_{12}$.

\item[(QT4)] $(\id \otimes \epsilon)(R) = 1$.

\item[(QT5)] $\Delta^{\cop}(h) = R \Delta(h) R^{-1}$, $\forall h \in H$.
\end{itemize}

In that case, $(H, R)$ is called a
quasitriangular Hopf algebra. Suppose $R \in H \otimes H$ is an
$R$-matrix. Then there are Hopf algebra maps $f_R: {H^*}^{\cop}
\to H$ and $f_{R_{21}}:H^* \to H^{\op}$ given, respectively, by
$$f_R(p) = \langle p, R^{(1)} \rangle R^{(2)},
\quad f_{R_{21}}(p) = \langle p, R^{(2)} \rangle R^{(1)},$$ for
all $p \in H^*$, where we use the notation $R =
R^{(1)} \otimes R^{(2)} \in H \otimes H$.

\medbreak Recall that for a finite dimensional Hopf algebra $H$, its Drinfeld
double, $D(H)$, is a quasitriangular Hopf algebra containing $H$ and $H^{*\cop}$ as Hopf subalgebras. We have $D(H) = H^{* \cop}
\otimes H$ as a coalgebra, with canonical $R$-matrix $\mathcal R
= \sum_i h^i \otimes h_i$, where $(h_i)_i$ is a basis of $H$ and
$(h^i)_i$ is the dual basis.

\begin{remark}\label{dH} Suppose $(H, R)$ is a finite dimensional quasitriangular
Hopf algebra, and let $D(H)$ be the Drinfeld double of $H$.  Then
there is a surjective Hopf algebra map $f: D(H) \to H$, such that $(f\otimes f)
\mathcal R = R$. The map $f$ is determined by $f(p\otimes h) = f_R(p)h$, for all $p \in H^*$, $h \in H$. \end{remark}

Indeed, we have $\Rep D(H) = \mathcal Z(\Rep H)$: the center of
the tensor category $\Rep  H$, and the map $f$ corresponds to the
canonical inclusion of the braided tensor category $\Rep H$ (with
braiding determined by the action of the $R$-matrix) into its
center.

\subsection{Abelian extensions and factorizable
groups}\label{ab-ext}

Let $(F, \Gamma)$ be a matched pair of finite groups. That is, $F$
and $\Gamma$ are endowed  with actions by permutations $\Gamma
\overset{\vartriangleleft}\leftarrow \Gamma \times F
\overset{\vartriangleright}\to F$ such that
\begin{align}\label{comp1}
s \vartriangleright xy & = (s \vartriangleright x) ((s \vartriangleleft x) \vartriangleright y), \\
\label{comp2} st \vartriangleleft x & = (s \vartriangleleft (t
\vartriangleright x)) (t \vartriangleleft x),
\end{align} for all $s, t \in \Gamma$, $x, y \in F$.

Given finite groups $F$ and $\Gamma$, providing them with a pair
of compatible actions is equivalent to giving a group $G$ together
with an exact factorization $G = F \Gamma$: the actions
$\vartriangleright$ and $\vartriangleleft$ are determined by the
relations $gx = (g \vartriangleright x)(g \vartriangleleft x)$, $x
\in F$, $g \in \Gamma$.

\medbreak Consider the left action of $F$ on $k^\Gamma$, $x.
\phi(g) = \phi(g \vartriangleleft x)$, $\phi \in k^\Gamma$, and
let $\sigma: F \times F \to (k^{\times})^\Gamma$ be a normalized
2-cocycle. Dually, we consider the right action of $\Gamma$ on $k^F$,
$\psi(x).g = \psi(x \vartriangleright g)$, $\psi \in k^F$, and let
$\tau: F \times F \to
(k^{\times})^\Gamma$ be a normalized 2-cocycle.

Assume in addition that $\sigma$ and  $\tau$ obey  the
following compatibility conditions:
\begin{align*} & \sigma_{ts}(x, y) \tau_{xy}(t, s)  =
 \tau_x(t, s) \, \tau_y(t \vartriangleleft (s \vartriangleright x), s \vartriangleleft x) \,
\sigma_{t}(s \vartriangleright x, (s \vartriangleleft x) \vartriangleright y) \, \sigma_{s}(x, y), \\
&\label{norm2-sigma-tau} \sigma_1(s, t)  = 1,  \qquad \tau_1(x, y)
= 1, \end{align*} for all $x, y \in F$, $s, t \in \Gamma$, where $\sigma = \sum_{s \in \Gamma}\sigma_s e_s$, $\tau = \sum_{x\in F}\tau_x e_x$,  $e_s \in k^{\Gamma}$ and $e_x \in k^F$ being the canonical idempotents.

The vector space $H = k^\Gamma \otimes k F$  becomes a
(semisimple) Hopf algebra with the crossed product algebra
structure and the crossed coproduct coalgebra structure. We shall
use the notation $H = k^\Gamma \, {}^{\tau}\#_{\sigma}kF$. The
multiplication and comultiplication of $H$ are determined by the formulas
\begin{align*} (e_g \# x)(e_h \# y) & = e_{g \vartriangleleft x, h}\, \sigma_g(x,
y) e_g \# xy, \\
 \Delta(e_g \# x) & = \sum_{st=g} \tau_x(s, t)\,
e_s \# (t \vartriangleright x) \otimes e_{t}\# x,
\end{align*}
for all $g,h\in \Gamma$, $x, y\in F$.

\medbreak There is an exact sequence of Hopf algebras $$1 \to
k^\Gamma \to H \to kF \to 1,$$ also called an \emph{abelian} exact
sequence. Conversely, every Hopf algebra $H$ fitting into an exact
sequence of this form is isomorphic to $k^\Gamma \,
{}^{\tau}\#_{\sigma}kF$ for appropriate actions
$\vartriangleright$, $\vartriangleleft$, and compatible cocycles
$\sigma$ and $\tau$. See \cite{kac, majid-ext, t-ext, ma-ext,
ma-newdir}.

\begin{remark}\label{div-f} Let $H = k^\Gamma \, {}^{\tau}\#_{\sigma}kF$ as above. For $s \in \Gamma$, let $F^s \subseteq F$ be the stabilizer of $s$ under the action of $F$. Then the map $\sigma_s:F^s \times F^s \to k^{\times}$ defines a 2-cocycle on $F^s$. For each representation $\rho$ of the twisted group algebra $k_{\sigma_s}F^s$, consider the induced representation $V_{s, \rho} = \Ind_{k^{\Gamma} \#_{\sigma}kF^s}^H s \otimes \rho$.

The irreducible representations of $H$ are classified by $V_{s, \rho}$, where $s$ runs over a set of representatives of the orbits of $F$ in $\Gamma$ and $\rho$ runs over the isomorphism classes of  irreducible representations of $k_{\sigma_s}F^s$ \cite{MoW}.

Note that $\dim V_{s, \rho} = [F: F^s] \, \deg \rho$. So, in particular, for all irreducible representation $V$ of $H$ we have that $\dim V$ divides the order of $F$. \end{remark}

\subsubsection{Kac exact sequence} \label{kac-es} Fix a matched pair of groups $(F, \Gamma)$.
The set of equivalence classes of extensions $1 \to k^\Gamma \to H
\to kF \to 1$ giving rise to these actions is denoted by
$\Opext(k^\Gamma, kF)$: it is a finite group under the Baer
product of extensions.

\bigbreak The class of an element of  $\Opext(k^\Gamma, kF)$ can
be represented by a pair of compatible cocycles $(\tau, \sigma)$. The group $\Opext
(k^\Gamma, kF)$ can also be described as the first cohomology group of a
certain double complex \cite[Proposition 5.2]{ma-ext}.

An important result of G. I. Kac \cite{kac, ma-newdir},
says that there is a long exact sequence
\begin{align*}
0 & \to H^1(G, k^{\times}) \xrightarrow{\res}   H^1(F, k^{\times})
\oplus  H^1(\Gamma, k^{\times}) \to \Aut(k^\Gamma \# kF)
\\ & \to  H^2(G, k^{\times}) \xrightarrow{\res}  H^2(F, k^{\times}) \oplus  H^2(\Gamma, k^{\times})
 \to \Opext(k^\Gamma, kF) \\ & \xrightarrow{\kappa}  H^3(G,
k^{\times}) \xrightarrow{\res}  H^3(F, k^{\times}) \oplus
H^3(\Gamma, k^{\times}) \to \dots
\end{align*}

This result turns out to be an important tool in calculations related to the Opext group.
In this paper we shall use the following result, contained in \cite[Theorem 1.3]{gp-ttic}, concerning the map $\kappa: \Opext(k^\Gamma, kF)  \to  H^3(G,
k^{\times})$:

\begin{theorem}\label{kac} The Drinfeld double $D(H)$ is twist equivalent to the twisted quantum double $D^{\omega}(G)$, where $\omega = \kappa(\tau, \sigma)$ is the 3-cocycle arising from the Kac exact sequence.
In other words, there is an equivalence of fusion categories $\Rep D(H) \simeq \Rep
D^{\omega}(G)$.  \end{theorem}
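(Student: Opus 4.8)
The plan is to argue categorically and transport the statement through Drinfeld centers, reducing it to the principle that categorically Morita equivalent fusion categories have braided-equivalent centers. By the discussion following Remark~\ref{dH} we have $\Rep D(H)\simeq \mathcal Z(\Rep H)$, and by the Dijkgraaf--Pasquier--Roche description of the twisted double we have $\Rep D^{\omega}(G)\simeq \mathcal Z(\vect_G^{\omega})$, the center of the category of $G$-graded vector spaces with associativity constraint twisted by $\omega$. Hence it suffices to produce a braided tensor equivalence $\mathcal Z(\Rep H)\simeq \mathcal Z(\vect_G^{\omega})$: this is stronger than the asserted equivalence of fusion categories, and once it is in hand, Schauenburg's theorem (\cite{scha}, as recorded in Section~\ref{ab-ext}) upgrades a tensor equivalence of representation categories of the underlying (quasi-)Hopf algebras to the claimed twist/gauge equivalence of $D(H)$ and $D^{\omega}(G)$.

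The core step is to realize $\Rep H$ as a group-theoretical category dual to $\vect_G^{\omega}$. Because $H = k^\Gamma \, {}^{\tau}\#_{\sigma}kF$ sits in the abelian exact sequence \eqref{sec-abel}, its representation category carries an action of $\vect_G^{\omega}$ and is equivalent to a dual category $(\vect_G^{\omega})^*_{\mathcal M}$ taken with respect to an indecomposable module category $\mathcal M$ attached to one of the factor subgroups (say $F\subseteq G$) together with a $2$-cochain built from $\sigma$. Concretely I would exhibit an algebra $\mathcal A$ in $\vect_G^{\omega}$ --- a twisted group algebra of $F$ --- and a tensor equivalence between the category of $\mathcal A$-bimodules and $\Rep H$; the multiplication and comultiplication formulas of Section~\ref{ab-ext} are precisely what is needed to match the bimodule structure with the crossed (co)product. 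Granting this, $\Rep H$ and $\vect_G^{\omega}$ are categorically Morita equivalent, so their Drinfeld centers are braided equivalent (M\"uger), which is the equivalence sought above.

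The main obstacle is the cohomological bookkeeping that pins the class down to $\omega = \kappa(\tau,\sigma)$, rather than merely to \emph{some} $3$-cocycle supported on $G$. This is exactly where the Kac exact sequence enters: the connecting map $\kappa:\Opext(k^\Gamma,kF)\to H^3(G,k^\times)$ is constructed so that the associator inherited by $\vect_G^{\omega}$ along the equivalence of the previous paragraph is governed by the image of the pair $(\tau,\sigma)$. I would therefore track the associativity and unit constraints through the bimodule equivalence and verify that the $3$-cocycle on $G$ that they assemble into represents $\kappa(\tau,\sigma)$; the normalization conditions on $\sigma$ and $\tau$ and the matched-pair compatibilities \eqref{comp1}--\eqref{comp2} are what guarantee that the cochain built from $\sigma$ and $\tau$ descends to a well-defined class in $H^3(G,k^\times)$.

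An alternative, more computational route bypasses centers entirely: one computes $D(H)$ explicitly as an algebra and coalgebra from the bicrossed product presentation and exhibits a twist $J\in D(H)\otimes D(H)$ with $D(H)^{J}\cong D^{\omega}(G)$ as quasi-Hopf algebras. This is closer to a direct derivation, but the construction of $J$ and the verification of the (quasi-)Hopf axioms are the heavy part; the categorical argument has the advantage of isolating the only genuinely delicate ingredient, namely the identification of the associator class with $\kappa(\tau,\sigma)$.
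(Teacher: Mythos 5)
First, a point of comparison: the paper does not actually prove Theorem~\ref{kac} --- it is quoted verbatim from \cite[Theorem 1.3]{gp-ttic} (with the split case $(\tau,\sigma)=1$ going back to \cite{MBG}). Measured against that cited proof, your strategy is essentially the same one: realize $\Rep H$ as a group-theoretical category, i.e.\ as the dual of $\vect_G^{\omega}$ with respect to the module category given by a twisted group algebra of $F$ inside $\vect_G^{\omega}$, and then pass to Drinfeld centers, using $\Rep D(H)\simeq \mathcal Z(\Rep H)$, $\Rep D^{\omega}(G)\simeq \mathcal Z(\vect_G^{\omega})$, and invariance of the center under categorical Morita equivalence. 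So the architecture is right and matches the source.

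The genuine gap is that your proposal defers precisely the step that constitutes the theorem's content: pinning the associator class down to $\omega=\kappa(\tau,\sigma)$, as opposed to \emph{some} class $\omega'\in H^3(G,k^{\times})$. Everything up to that point ("I would exhibit an algebra $\mathcal A$\dots", "I would therefore track the associativity constraints\dots") proves only that $D(H)$ is twist equivalent to $D^{\omega'}(G)$ for an unidentified $\omega'$, which is useless for the applications in Sections~\ref{simple} and \ref{almost-simple}, where $\omega$ must be the image of the given extension class. Carrying this out requires the explicit cochain-level description of the connecting map $\kappa$ coming from the Kac--Masuoka double complex (\cite{kac}, \cite[Proposition 5.2]{ma-ext}): one writes down the concrete $3$-cocycle on $G=F\Gamma$ assembled from the pair $(\tau,\sigma)$, checks that its restriction to $F$ is trivialized by a cochain built from $\sigma$ (this is what makes $k_{\sigma}F$ an algebra in $\vect_G^{\omega}$ at all --- a compatibility you assert but do not verify), and matches the resulting bimodule category with the bicrossed (co)product formulas of Section~\ref{ab-ext}. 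None of this is routine bookkeeping; it is the proof. A secondary inaccuracy: you invoke \cite{scha} to upgrade the tensor equivalence to a twist equivalence of the (quasi-)Hopf algebras, but Schauenburg's theorem is about Hopf algebras; for the quasi-Hopf algebra $D^{\omega}(G)$ one needs the quasi-fiber-functor reconstruction argument (using that tensor equivalences preserve Frobenius--Perron dimensions, which here equal vector space dimensions). Since the paper only ever uses the fusion category equivalence $\Rep D(H)\simeq\Rep D^{\omega}(G)$, this slip is harmless for the applications, but it should not be attributed to \cite{scha} as stated.
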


The twisted quantum double $D^{\omega}(G)$, $\omega \in H^3(G, k^{\times})$, is the quasi-Hopf algebra introduced by Dijkgraaf, Pasquier and Roche \cite{DPR}.
For the case of split extensions, that is when $(\tau, \sigma) = 1$ and hence $\omega = 1$, this result was obtained previously in \cite{MBG}.

\begin{corollary}\label{cor-qt} Let $G = F\Gamma$ be an exact factorization of the group $G$. Let also $H$ be a Hopf algebra fitting into an abelian extension $k \to k^{\Gamma} \to H \to kF \to k$ corresponding to this factorization. Suppose in addition that $H$ admits a quasitriangular structure. Then $\Rep H$ is equivalent to a  full (braided) fusion subcategory of $\Rep D^{\omega}(G)$.  \end{corollary}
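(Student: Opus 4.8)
The plan is to exploit the quasitriangular structure in order to embed $\Rep H$ into $\Rep D(H)$, and then to transport the whole picture to $\Rep D^{\omega}(G)$ by means of Theorem \ref{kac}. First, since $(H, R)$ is quasitriangular, Remark \ref{dH} provides a surjective Hopf algebra map $f\colon D(H) \onto H$ satisfying $(f \otimes f)\mathcal R = R$, where $\mathcal R$ is the canonical $R$-matrix of the double. Thus $f$ exhibits $H$ as a quotient Hopf algebra of $D(H)$, and this is the object I would build everything on.

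Next, I would read this map categorically. By Remark \ref{tens-subc}, applied with $D(H)$ in the role of $H$ and the quotient $f\colon D(H) \onto H$, restriction of representations along $f$ realizes $\Rep H$ as a full tensor subcategory of $\Rep D(H)$; in particular the restriction functor $\Rep H \to \Rep D(H)$ is a fully faithful tensor functor, and its image is closed under tensor products, duals and direct sums, hence is a fusion subcategory. The remaining point is to check that this embedding is braided. The braiding of $\Rep D(H)$ is induced by $\mathcal R$, while the braiding of $\Rep H$ is induced by $R$; on objects lying in the image of the restriction functor the former is computed by $(f \otimes f)\mathcal R = R$, so the two braidings agree and the embedding is a braided tensor functor. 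This is precisely the canonical inclusion of $\Rep H$, with its $R$-matrix braiding, into its Drinfeld center $\mathcal Z(\Rep H) = \Rep D(H)$, as recorded after Remark \ref{dH}.

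Finally, Theorem \ref{kac} gives an equivalence $\Rep D(H) \simeq \Rep D^{\omega}(G)$ with $\omega = \kappa(\tau, \sigma)$. Composing the braided embedding $\Rep H \into \Rep D(H)$ with this equivalence yields the desired realization of $\Rep H$ as a full fusion subcategory of $\Rep D^{\omega}(G)$, proving Corollary \ref{cor-qt}. The main point requiring care is that the equivalence of Theorem \ref{kac} be braided and not merely monoidal: both $\Rep D(H)$ and $\Rep D^{\omega}(G)$ are canonically braided as Drinfeld centers, and the twist (gauge) equivalence underlying Theorem \ref{kac} carries the canonical braiding of one to that of the other, so that the composite functor is indeed braided. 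Granting this, the image of $\Rep H$ is a braided fusion subcategory, as claimed.
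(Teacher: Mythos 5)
Your proof is correct and follows exactly the paper's own route: the paper's proof is the one-line statement that the corollary ``follows from Theorem \ref{kac}, in view of Remarks \ref{tens-subc} and \ref{dH}'', and your argument simply spells out that combination in full --- the surjection $f\colon D(H)\to H$ with $(f\otimes f)\mathcal R = R$ from Remark \ref{dH}, the resulting full tensor embedding $\Rep H \to \Rep D(H)$ from Remark \ref{tens-subc}, and the transport through the equivalence $\Rep D(H) \simeq \Rep D^{\omega}(G)$ of Theorem \ref{kac}. Your added care about the braidings (agreement via $(f\otimes f)\mathcal R = R$, and preservation of the braiding under the twist equivalence, which the paper notes in its preliminaries since twisting preserves quasitriangularity) is a faithful elaboration of details the paper leaves implicit, not a different argument.
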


\begin{proof} It follows from Theorem \ref{kac}, in view of Remarks \ref{tens-subc} and \ref{dH}. \end{proof}

\subsection{Fusion subcategories of $D^{\omega}(G)$}\label{nnw}

Let $G$ be a finite group and let $\omega \in H^3(G, k^{\times})$.  Isomorphism classes of simple objects in $\Rep D^{\omega}(G)$ are parameterized by pairs $(g, \pi)$, where $g$ runs over a set of representatives of the conjugacy classes of $G$, and $\pi$ is an irreducible representation of the twisted group algebra $k_{\beta_g}C_G(g)$, where the 2-cocycle $\beta_g: C_G(g) \times C_G(g) \to k^{\times}$ is given by \begin{equation*}\beta_g(x, y) = \dfrac{\omega(g, x, y) \omega(x, y, g)}{\omega(x, g, y)}.  \end{equation*}

In view of the description of irreducible representations in crossed products given in \cite{MoW}, this can be seen as a consequence of the fact that, as an algebra, $D^{\omega}(G)$ is a crossed product $D^{\omega}(G) = k^G \#_{\beta}kG$ with respect to the adjoint action of $G$ on itself and the 2-cocycle $\beta: kG \otimes kG \to k^{G}$  determined by $$\beta_g(x, y) = \dfrac{\omega(g, x, y) \omega(x, y, y^{-1}x^{-1}gxy)}{\omega(x, x^{-1}gx, y)}.$$
The irreducible representation corresponding to the pair  $(g, \pi)$ may thus be identified with the induced representation $V_{(g, \pi)} = \Ind_{k^G\#_{\beta}kC_G(g)}^{D^{\omega}(G)} g \otimes \pi$. Its dimension is therefore \begin{equation}\label{dim-v}\dim V_{(g, \pi)} = [G: C_G(g)] \, \deg \pi.\end{equation}
See \cite{ACM}.

\medbreak Full fusion subcategories of $\Rep D^{\omega}(G)$ are
determined in \cite[Theorem 5.11]{NNW}. They are in bijection with
tripes $(K_1, K_2, B)$ where
\begin{itemize}\item[(a)] $K_1$ and $K_2$ are normal subgroups of $G$
that centralize each other, and
\item[(b)] $B : K_1 \times K_2 \to k^{\times}$ is
a $G$-invariant $\omega$-bicharacter, that is, it satisfies
\begin{equation*}B(x, yz) = \beta_x^{-1} (y, z)B(x, y)B(x, z), \quad B(tx, y) = \beta_y(t, x)B(t, y)B(x, y), \end{equation*} for all $t, x \in K_1$, $y, z \in K_2$, and
the invariance condition
\begin{equation*}B(x^{-1}ax, h) = \dfrac{\beta_a(x, h)\beta_a(xh, x^{-1})}{\beta_a(x, x^{-1})} \, B(a, xhx^{-1}), \end{equation*}
for all $x, y \in G$, $h \in K_2$, $a \in K_1$. See \cite[Definition 5.4]{NNW}. \end{itemize}

\medbreak The bijection is determined as follows. The triple $(K_1, K_2, B)$ gives rise to the fusion subcategory $\C(K_1, K_2, B)$, defined as the full abelian subcategory of $\Rep D^{\omega}(G)$ generated by the simple objects $V_{(g, \pi)}$, where $g$ runs over a set of representatives of conjugacy classes of $G$ in $K_1$, and $\pi$ is an irreducible $\beta_g$-character of $C_G(g)$ such that $\pi(h) = B(g, h) \deg \pi$, for all $h \in K_2$.

\medbreak In the other direction, a full fusion subcategory $\C$ of $\Rep D^{\omega}(G)$ determines a triple $(K^{\C}_1, K^{\C}_2, B^{\C})$, where
\begin{align*}K^{\C}_1 &= \{ gag^{-1} | \, g \in G, \, (a, \pi) \in \C \,\text{ for some } \pi \}, \\ K_2^{\C} & = \bigcap_{\pi: \, (e, \pi) \in \C} \ker \pi. \end{align*}
(An equivalent definition of $K_2^{\C}$ consists in letting  $\C \cap \Rep G \simeq \Rep G/K_2^{\C}$.) The $\omega$-bicharacter $B^{\C}:  K_1^{\C} \times  K_2^{\C} \to k$ is determined by
\begin{equation*} B^{\C}(x^{-1}gx, h) := \dfrac{\beta_g(x, h)\beta_g(xh, x^{-1}) \, \pi(xhx^{-1})}{\beta_g(x, x^{-1}) \, \deg \pi},\end{equation*} where $(g, \pi)$ is a class of an irreducible representation of $D^{\omega}(G)$ that belongs to $\C$.

The above assignments are inverse bijections.

\medbreak By \cite[Lemma 5.9]{NNW} the dimension of the fusion
subcategory $\C(K_1, K_2, B)$ corresponding to the triple $(K_1, K_2, B)$
is $|K_1|[G : K_2]$. Hence $\dim \C(K_1, K_2, B) = |G|$ if and only if
$|K_1| = |K_2|$.

It follows from the description in \cite[(25)]{NNW} and the
definition of $G$-invariant $\omega$-bicharacter, that  $\C(1, 1, 1) \simeq \Rep G$.

\medbreak As a special case,  full  fusion subcategories of $\Rep D(G)$ are parameterized by triples
$(K_1, K_2, B)$, where
\begin{itemize}\item[(a')] $K_2, K_1$ are normal subgroups of $G$ centralizing each other; \item[(b')] $B: K_1 \times K_2 \to k^{\times}$ is a
$G$-invariant bicharacter. \end{itemize} See \cite[Theorem
3.12]{NNW}. Let  $\C(K_1, K_2, B)$ be the fusion subcategory of $D(G)$
corresponding to the triple $(K_1, K_2, B)$. The dimension of $\C(K_1,
K_2, B)$ is $|K_1| |G: K_2|$ \cite[Lemma 3.10]{NNW}.

\medbreak By Corollary \ref{cor-qt}, every quasitriangular structure on a bicrossed product Hopf algebra $H = k^{\Gamma} {}^{\tau}\#_{\sigma}kF$ as in \eqref{sec-abel} determines a full fusion subcategory of $\Rep D^{\omega}(G)$, where $G = F \Gamma$ and $\omega = \kappa(\tau, \sigma)$ is the 3-cocycle coming from $H$ in the Kac exact sequence. Hence we get:

\begin{corollary}\label{h-triple} Let $H = k^{\Gamma} {}^{\tau}\#_{\sigma}kF$ be an abelian extension as in \eqref{sec-abel}.  Then a quasitriangular structure in $H$ determines a triple $(K_1, K_2, B)$ satisfying (a) and (b) with respect to  $G = F \Gamma$ and $\omega = \kappa(\tau, \sigma)$,  such that $|K_1| = |K_2|$. \qed \end{corollary}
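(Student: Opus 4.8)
The plan is to assemble Corollary \ref{h-triple} directly from the machinery developed in this section, treating it essentially as a bookkeeping consequence of Corollary \ref{cor-qt} and the classification of fusion subcategories from \cite{NNW}. First I would invoke Corollary \ref{cor-qt}: since $H = k^{\Gamma}\,{}^{\tau}\#_{\sigma}kF$ fits into the abelian extension \eqref{sec-abel} and carries a quasitriangular structure by hypothesis, the braided fusion category $\Rep H$ embeds as a full braided fusion subcategory of $\Rep D^{\omega}(G)$, where $G = F\Gamma$ and $\omega = \kappa(\tau, \sigma)$ is the $3$-cocycle produced by the Kac exact sequence. This is exactly the content of Theorem \ref{kac} combined with Remarks \ref{tens-subc} and \ref{dH}, so the embedding is given for free.

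Next I would apply the classification of \cite[Theorem 5.11]{NNW} recalled above: every full fusion subcategory of $\Rep D^{\omega}(G)$ is of the form $\C(K_1, K_2, B)$ for a unique triple $(K_1, K_2, B)$ satisfying conditions (a) and (b). Applying this to the image of $\Rep H$ inside $\Rep D^{\omega}(G)$ yields the desired triple at once. The only remaining point is to pin down the dimension constraint $|K_1| = |K_2|$, and here I would use the dimension formula $\dim \C(K_1, K_2, B) = |K_1|[G : K_2]$ from \cite[Lemma 5.9]{NNW}, which gives $\dim \C(K_1, K_2, B) = |G|$ precisely when $|K_1| = |K_2|$, as the excerpt already observes.

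The crux is therefore a dimension count: I must verify that the subcategory corresponding to $\Rep H$ has Frobenius-Perron dimension equal to $|G|$. Since $\FPdim \Rep H = \dim H = |G|$ (as $H = k^{\Gamma}\otimes kF$ has dimension $|\Gamma|\,|F| = |G|$ by the exact factorization), and since the embedding $\Rep H \hookrightarrow \Rep D^{\omega}(G)$ is a fully faithful tensor functor that preserves Frobenius-Perron dimensions, the image subcategory has dimension exactly $|G|$. Matching this against $|K_1|[G:K_2]$ forces $|K_1| = |K_2|$, completing the argument.

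The step I expect to be the only genuine (if minor) obstacle is justifying that the Frobenius-Perron dimension is preserved under the embedding and that $\FPdim \Rep H = |G|$; both are standard for fusion categories arising from semisimple Hopf algebras, where Frobenius-Perron dimension coincides with the dimension of the underlying Hopf algebra, but they are the one place where something beyond pure citation is invoked. Everything else is an immediate reading-off from the cited classification, so the proof is essentially a one-line deduction once the dimension bookkeeping is in place; indeed the statement is flagged with \qed in the excerpt, signalling that the authors regard it as following formally from the preceding results.
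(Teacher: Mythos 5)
Your proposal is correct and follows exactly the paper's intended argument: the author states this corollary with \qed precisely because it is the formal combination of Corollary \ref{cor-qt}, the classification of full fusion subcategories $\C(K_1,K_2,B)$ of $\Rep D^{\omega}(G)$ from \cite[Theorem 5.11]{NNW}, and the dimension count $\dim \C(K_1,K_2,B) = |K_1|[G:K_2] = |G|$ forcing $|K_1| = |K_2|$, all of which is laid out in the paragraphs immediately preceding the statement. Your added care about Frobenius--Perron dimensions being preserved under the full embedding, and $\FPdim \Rep H = \dim H = |F|\,|\Gamma| = |G|$, is exactly the (standard) bookkeeping the paper leaves implicit.
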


\medbreak The following proposition gives some restrictions on the possible triples in terms of the involved factorization.

\begin{proposition} Let $G = F\Gamma$ be an exact factorization of the group $G$. Let also $H$ be a Hopf algebra extension \eqref{sec-abel} corresponding to this factorization and assume that $H$ admits a quasitriangular structure.

Let $(K_1, K_2, B)$ be the triple determined by the quasitriangular structure in $H$.  Then for all $g \in K_1$, and for all irreducible character $\pi$ of the twisted group algebra $k_{\beta_g}C_G(g)$ such that $\pi(h) = B(g, h)\deg \pi$, for all $h \in K_2$, the product $[G: C_G(g)] \deg \pi$ divides the order of $F$. \end{proposition}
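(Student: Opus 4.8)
The plan is to read off the statement from the explicit identification of $\Rep H$ with the fusion subcategory $\C(K_1, K_2, B)$ and then compare dimensions of simple objects on the two sides. By Corollary \ref{cor-qt}, the quasitriangular structure gives a fully faithful braided tensor functor $\Rep H \hookrightarrow \Rep D^{\omega}(G)$ (obtained by composing the embedding of Remark \ref{dH} with the equivalence $\Rep D(H) \simeq \Rep D^{\omega}(G)$ of Theorem \ref{kac}), and by Corollary \ref{h-triple} its image is precisely the full fusion subcategory $\C(K_1, K_2, B)$ attached to the triple determined by the $R$-matrix. So I would first record that this functor induces a bijection between the isomorphism classes of simple objects of $\Rep H$, that is, the irreducible representations of $H$, and the simple objects of $\C(K_1, K_2, B)$.

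Next I would identify those simple objects explicitly using the description recalled in Section \ref{nnw} (from \cite[Theorem 5.11]{NNW}): they are exactly the objects $V_{(g, \pi)}$ with $g$ ranging over representatives of the conjugacy classes of $G$ contained in $K_1$ and $\pi$ an irreducible $\beta_g$-character of $C_G(g)$ satisfying $\pi(h) = B(g, h) \deg \pi$ for all $h \in K_2$ — precisely the pairs $(g, \pi)$ occurring in the statement. By the dimension formula \eqref{dim-v}, the underlying dimension of $V_{(g, \pi)}$ equals $[G : C_G(g)] \deg \pi$. The decisive step is then a dimension-matching argument: a tensor functor preserves Frobenius--Perron dimensions, and for the representation category of a semisimple (quasi-)Hopf algebra the Frobenius--Perron dimension of an object coincides with its vector-space dimension. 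Hence under the equivalence $\Rep H \simeq \C(K_1, K_2, B)$ every simple object $V_{(g, \pi)}$ corresponds to an irreducible $H$-module $V$ with $\dim V = \FPdim V_{(g, \pi)} = [G : C_G(g)] \deg \pi$. Finally I would invoke Remark \ref{div-f}, which says that $\dim V$ divides $|F|$ for each irreducible representation $V$ of $H$, and conclude that $[G : C_G(g)] \deg \pi$ divides the order of $F$.

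The argument is essentially a translation through the identifications already set up in the excerpt, so I do not expect a serious analytic obstacle. The one point that must be handled with care is the dimension comparison: I must use that the functor of Corollary \ref{cor-qt} carries the simple object $V_{(g, \pi)}$ to a \emph{single} irreducible $H$-module of the \emph{same} dimension, rather than merely to some object of that dimension. This is exactly what full faithfulness (simple objects go to simple objects bijectively) together with preservation of $\FPdim$ (tensor functor) guarantees, so no separate computation of the correspondence $(g,\pi) \mapsto V$ is required; everything else reduces to substituting into the dimension formulas \eqref{dim-v} and those of Remark \ref{div-f}.
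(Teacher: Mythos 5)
Your proposal is correct and follows essentially the same route as the paper: identify the simple objects of $\Rep H \simeq \C(K_1, K_2, B)$ with the pairs $(g, \pi)$ described in \cite[Theorem 5.11]{NNW}, then combine the dimension formula \eqref{dim-v} with the divisibility statement of Remark \ref{div-f}. Your explicit justification of the dimension-matching step via preservation of Frobenius--Perron dimension (which equals vector-space dimension for representation categories of semisimple quasi-Hopf algebras) is a point the paper leaves implicit, but it is a refinement of the same argument rather than a different one.
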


\begin{proof} The group $K_1$ is determined as the union of the conjugacy classes of
elements  $g \in G$ such that the (class of the) irreducible
representation corresponding to some pair $(g, \pi)$ belongs to
the fusion subcategory $\Rep H = \C(K_1, K_2, B)$ of
$D^{\omega}(G)$ \cite[5.2]{NNW}. By the description of the
irreducible representations belonging to $\C(K_1, K_2, B)$, these
are exactly those corresponding to pairs $(g, \pi)$ where $g \in
K_1$ and $\pi$ is an irreducible character of the twisted group
algebra $k_{\beta_g}C_G(g)$, such that $\pi(h) = B(g, h)\deg \pi$,
for all $h \in K_2$.

The proposition follows from Remark \ref{div-f} and Formula \eqref{dim-v} for the dimension of such an irreducible representation. \end{proof}

\section{Some examples associated to symmetric
groups}\label{symm}

Let $C_n$ be the cyclic group of order $n$ and $\mathbb S_n$ the symmetric group on $n$ symbols.
Let $H = k^{C_n} \# k\mathbb S_{n-1}$ be the bismash product
(split abelian extension) associated to the matched pair $(C_n,
\mathbb S_{n-1})$ arising from the exact factorization $\mathbb S_{n} = C_n \mathbb S_{n-1}$.

In this case, the group $\Opext(k^{C_n}, k\mathbb S_{n-1})$
corresponding to this matched pair is trivial \cite[Theorem 4.1]{ma-calculations}. Hence $H$ is the
unique, up to isomorphisms, Hopf algebra fitting into an abelian
exact sequence $k \to k^{C_n} \to H \to k\mathbb S_{n-1} \to k$.

The results in this section are a special case of those obtained in Section \ref{almost-simple}. In particular, Proposition \ref{simetrico} follows from Proposition \ref{sym-2}.

Let  $\C(K_1, K_2, B)$ be the fusion subcategory of $D(\mathbb S_n)$
corresponding to the triple $(K_1, K_2, B)$ satisfying (a'), (b') and such that  $|K_1| = |K_2|$.

Suppose $n \geq 5$. So that the only normal
subgroups of $\mathbb S_n$ are $1$, $\mathbb A_n$ and $\mathbb
S_n$.

Therefore  the only fusion subcategory of
$D(\mathbb S_n)$ of dimension $n!$ is $\C(1; 1; 1) \simeq k\mathbb
S_n$.

Combining this with Corollary \ref{h-triple} we get the following:

\begin{lemma}\label{sym} Suppose $n \geq 5$.

\begin{enumerate}\item If $H$ is quasitriangular, then $H$ is a twisting of
$k\mathbb S_n$.

\item If $H^*$ is quasitriangular, then $H^*$ is a twisting of $k\mathbb S_n$.
\end{enumerate}\end{lemma}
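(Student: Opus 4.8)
The plan is to combine the structural constraint from Corollary \ref{h-triple} with the fact that $\mathbb{S}_n$ has very few normal subgroups when $n \geq 5$. First I would recall the setup: by Corollary \ref{h-triple}, a quasitriangular structure on $H = k^{C_n}\#k\mathbb{S}_{n-1}$ determines a triple $(K_1, K_2, B)$ with $K_1, K_2$ normal in $G = \mathbb{S}_n$, centralizing each other, and $|K_1| = |K_2|$, giving the fusion subcategory $\Rep H = \C(K_1, K_2, B) \subseteq \Rep D(\mathbb{S}_n)$ (here $\omega = 1$ since the extension is split). The dimension of this subcategory is $|K_1|\,[G:K_2] = |K_1|\,|K_2| \cdot |G|/|K_2|^2$; since $|K_1| = |K_2|$ this equals $|G| = n!$. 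So $\Rep H$ is a fusion subcategory of $D(\mathbb{S}_n)$ of dimension exactly $n!$.

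Next I would invoke the classification of normal subgroups: for $n \geq 5$ the only normal subgroups of $\mathbb{S}_n$ are $1$, $\mathbb{A}_n$, and $\mathbb{S}_n$. The centralizing condition, together with $|K_1| = |K_2|$, must then be checked against these three options. The pairs $(K_1, K_2)$ with both normal and $|K_1| = |K_2|$ are $(1,1)$, $(\mathbb{A}_n, \mathbb{A}_n)$, and $(\mathbb{S}_n, \mathbb{S}_n)$. But $\mathbb{A}_n$ and $\mathbb{S}_n$ are non-abelian for $n \geq 5$, so they do not centralize each other (or themselves), which rules out the latter two. Hence the only admissible triple is $(1,1,1)$, and by the remark in the excerpt that $\C(1,1,1) \simeq \Rep G$, we conclude $\Rep H \simeq \Rep \mathbb{S}_n$ as fusion categories. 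Since twist equivalence is the same as tensor equivalence of representation categories, this forces $H$ to be a twisting of $k\mathbb{S}_n$, proving part (1).

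For part (2), the symmetric argument applies to $H^*$. The key observation is that $H^* = (k^{C_n}\#k\mathbb{S}_{n-1})^* \cong k^{\mathbb{S}_{n-1}}\#kC_n$ is itself an abelian extension associated to the same matched pair read in the dual direction, corresponding to the factorization $\mathbb{S}_n = \mathbb{S}_{n-1}C_n$ with the roles of $F$ and $\Gamma$ interchanged. Thus Corollary \ref{h-triple} applies verbatim to $H^*$, producing a triple $(K_1, K_2, B)$ for the same group $G = \mathbb{S}_n$ with $|K_1| = |K_2|$, and the identical normal-subgroup analysis forces $\Rep H^* \simeq \Rep \mathbb{S}_n$, so $H^*$ is a twisting of $k\mathbb{S}_n$.

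The main obstacle I anticipate is the bookkeeping in part (2): one must be careful that the dual of the bismash product is again a bismash product for the flipped matched pair, so that Corollary \ref{h-triple} genuinely applies to $H^*$ with the same ambient group $\mathbb{S}_n$ and trivial cocycle $\omega$. Since $\Opext(k^{C_n}, k\mathbb{S}_{n-1})$ is trivial by \cite[Theorem 4.1]{ma-calculations}, the extension is split and $\omega = 1$, so no twisted-double subtleties arise and the plain $D(\mathbb{S}_n)$ classification (conditions (a'), (b')) suffices throughout. Once this duality is in place, the rest is an immediate application of the scarcity of normal subgroups and the non-abelianness of $\mathbb{A}_n$.
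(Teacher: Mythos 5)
Your proposal is correct and follows essentially the same route as the paper: both use the triviality of $\Opext(k^{C_n}, k\mathbb S_{n-1})$ to get $\omega = 1$, embed $\Rep H$ (resp.\ $\Rep H^*$) as a dimension-$n!$ fusion subcategory of $\Rep D(\mathbb S_n)$, and rule out the triples $(\mathbb A_n, \mathbb A_n, B)$ and $(\mathbb S_n, \mathbb S_n, B)$ via non-abelianness, leaving only $\C(1,1,1) \simeq \Rep \mathbb S_n$. Your handling of part (2) via the dual extension $k \to k^{\mathbb S_{n-1}} \to H^* \to kC_n \to k$ is a correct unpacking of the paper's terser appeal to $H^* \simeq k^{\mathbb S_{n-1}} \# kC_n$ and $D(H) \simeq D(H^{*\cop})$.
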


\begin{proof} If $H$ is quasitriangular, then $\Rep H$ is a fusion subcategory of $\Rep D(H) \simeq \Rep D(\mathbb S_n)$ of dimension $n!$.
Hence $\Rep H \simeq \Rep \mathbb S_n$. The other claim is
similar, because the same arguments apply as well to $H^* \simeq
k^{\mathbb S_{n-1}} \# kC_n$, and $D(H) \simeq D(H^{*\cop})$.
\end{proof}

\begin{proposition}\label{simetrico} Suppose $n \geq 5$. Then neither $H$ nor $H^*$
admit a quasitriangular structure. \end{proposition}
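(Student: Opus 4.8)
The plan is to argue by contradiction, combining the structural result of Lemma \ref{sym} with the specific representation theory of $H$ and $H^*$ recorded in Remark \ref{div-f}. Suppose first that $H = k^{C_n}\#k\mathbb{S}_{n-1}$ is quasitriangular. By Lemma \ref{sym}(1), $H$ must then be a twisting of the group algebra $k\mathbb{S}_n$; in particular $\Rep H \simeq \Rep\mathbb{S}_n$ as fusion categories, so $H$ and $k\mathbb{S}_n$ have the same multiset of irreducible-representation dimensions. The key observation is that these two multisets cannot agree. On the one hand, by Remark \ref{div-f} every irreducible representation of $H$ has dimension dividing $|F| = |\mathbb{S}_{n-1}| = (n-1)!$. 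On the other hand, $k\mathbb{S}_n$ has irreducible representations whose dimension does not divide $(n-1)!$ for $n\geq 5$; the point is that $\mathbb{S}_n$ possesses an irreducible character of degree not dividing $(n-1)!$, which yields the contradiction.

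The main work is therefore to exhibit, for each $n\geq 5$, an irreducible representation of $\mathbb{S}_n$ whose degree fails to divide $(n-1)!$. I would locate such a degree using the hook-length formula for the dimensions of the Specht modules indexed by partitions of $n$. A convenient candidate is the partition giving a degree divisible by a prime $p$ with $n/2 < p \leq n$ (such a $p$ exists by Bertrand's postulate), since any such prime divides $n! $ to the first power only and can be arranged to divide a suitable irreducible degree while $(n-1)!$ is not divisible by $n$; more robustly, one checks directly that $\mathbb{S}_n$ has an irreducible character whose degree is divisible by $n$, and since $n\nmid (n-1)!$ whenever $n$ is prime—and a parallel prime-power argument handles the composite cases—the required non-divisibility follows. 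For concreteness one can take the standard representation tensor constructions or the two-row partitions, whose dimensions $\binom{n}{k}-\binom{n}{k-1}$ are easily analyzed modulo a large prime.

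For the second statement, concerning $H^*$, the argument is symmetric. As noted in the proof of Lemma \ref{sym}, we have $H^* \simeq k^{\mathbb{S}_{n-1}}\#kC_n$, which is again an abelian extension associated to the same factorization $\mathbb{S}_n = C_n\mathbb{S}_{n-1}$ but now with $F = C_n$ playing the role of the quotient. If $H^*$ were quasitriangular, Lemma \ref{sym}(2) would force $H^*$ to be a twisting of $k\mathbb{S}_n$, so again every irreducible representation of $k\mathbb{S}_n$ would have dimension dividing $|C_n| = n$. This is patently false for $n\geq 5$, since $\mathbb{S}_n$ has irreducible representations of dimension $n-1$ (the standard representation), and $n-1$ does not divide $n$ for $n\geq 3$; indeed $\mathbb{S}_n$ has irreducible degrees far exceeding $n$, so the divisibility constraint from Remark \ref{div-f} is violated immediately.

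The hardest step is the $H$ case, where the divisor $(n-1)!$ is large and one must produce a genuinely non-dividing irreducible degree rather than relying on a crude size estimate; I expect to invoke the hook-length formula together with a prime-divisibility count (via Bertrand's postulate or Legendre's formula for $v_p(n!)$) to pin down the obstruction cleanly. Once that number-theoretic divisibility fact is in place, both contradictions close the proof.
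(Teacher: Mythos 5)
Your argument for $H^*$ is correct: twist equivalence preserves the algebra structure, so Lemma \ref{sym}(2) plus Remark \ref{div-f} forces every irreducible degree of $\mathbb S_n$ to divide $|C_n| = n$, and the standard representation of degree $n-1$ kills this. (The paper's own route here is slightly different but equally easy: the quotient $H^* \to kC_n$ would give $k\mathbb S_n$ a quotient twist-equivalent to $kC_n$, hence $n$ one-dimensional representations of $\mathbb S_n$.) The gap is in your $H$ half. Everything reduces, as you say, to the claim that for every $n \geq 5$ the group $\mathbb S_n$ has an irreducible character whose degree does not divide $(n-1)!$ --- but you never prove this, and the two mechanisms you sketch demonstrably fail for composite $n$. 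A degree divisible by $n$ gives nothing when $n$ is composite, since $n \mid (n-1)!$ for all composite $n \geq 6$; and a Bertrand prime $n/2 < p \leq n-1$ satisfies $v_p(n!) = v_p((n-1)!) = 1$ (writing $v_p$ for the $p$-adic valuation), so divisibility of a degree by such a $p$ distinguishes nothing. What the hook-length formula actually demands is, for some prime $p \mid n$, a partition $\lambda$ of $n$ whose hook product is prime to $p$ (a $p$-core of size $n$), so that $v_p(d_\lambda) = v_p(n!) > v_p((n-1)!)$. The existence of $t$-cores of every size is a genuinely hard theorem for small $t$ (Granville--Ono, valid for $t \geq 4$; $t$-cores of a given size can fail to exist for $t = 2, 3$), and $n$ may have no prime factor exceeding $3$, so the ``parallel prime-power argument handles the composite cases'' step is not a routine check --- as written, your proof is incomplete for all composite $n$.

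The paper avoids this arithmetic entirely with a structural argument you should adopt: twisting preserves Hopf algebra quotients, so the quotient $H \to k\mathbb S_{n-1}$ would transport to a quotient Hopf algebra $k\mathbb S_n \to L$ with $\dim L = (n-1)!$; every quotient Hopf algebra of a group algebra is the group algebra of a quotient group, and for $n \geq 5$ the proper nontrivial quotients of $\mathbb S_n$ have order $2$, which is incompatible with $(n-1)! \geq 24$. This two-line contradiction replaces your entire number-theoretic program, whose central claim is plausibly true but would require hook-length and $p$-core machinery far beyond what your proposal supplies.
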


\begin{proof} Suppose on the contrary that $H$ admits a
quasitriangular structure. Then $H$ would be a twisting of the
group algebra $k\mathbb S_n$, in view of Lemma \ref{sym}.
Similarly, if $H^*$ admits a quasitriangular structure, then $H^*$
would be a twisting of $k\mathbb S_n$.

Note that twisting preserves Hopf algebra quotients. Consider
first the case of the dual Hopf algebra $H^*$. In this case, there
is a quotient Hopf algebra $H^* \to kC_n$, which gives rise to a
quotient Hopf algebra $k\mathbb S_n \to L$, where $L$ is a certain
twisting of $kC_n$. Thus $L \simeq kC_n$. But this gives $n$
non-equivalent representations of dimension $1$ of the group
algebra $k\mathbb S_n$, which is impossible because $n > 2$. Hence
$H^*$ admits no quasitriangular structure.

Similarly, if $H$ admits a quasitriangular structure, then $H$
must be a twisting of $k\mathbb S_n$. But $H$ has a quotient Hopf
algebra isomorphic to $k\mathbb S_{n-1}$. This gives a quotient
Hopf algebra $k\mathbb S_n \to L$, with $\dim L = (n-1)!$. On the
other hand, every such quotient Hopf algebra is of the form $kF$,
where $F$ is a quotient group of $\mathbb S_n$. Then necessarily
$|F| = 2 = (n-1)!$, which contradicts the assumption that $n \geq
5$. This contradiction shows that $H$ does not admit a
quasitriangular structure. The proof of the proposition is now
complete. \end{proof}

\begin{remark} If $n = 3$, then $H \simeq k\mathbb S_3$ is a
group algebra. Thus $H$ is quasitriangular in this case, while $J
= H^* \simeq k^{\mathbb S_3}$ is not.

The arguments fail if $n = 4$ because we could also take $(K_1, K_1,
B)$, where $K_1 \subseteq \mathbb S_4$ is the Klein subgroup, which
is abelian.
\end{remark}

\section{Hopf algebras associated to simple groups}\label{simple}

Let $G = F\Gamma$ be a proper exact factorization of the group $G$. We
assume in this section that the group $G$ is simple non-abelian.

Let $H$ be a semisimple Hopf algebra fitting into an exact
sequence \begin{equation}\label{sec}k \to k^{\Gamma} \to H \to kF
\to k, \end{equation} corresponding to the matched pair $(F,
\Gamma)$.

In particular, $H$ is isomophic to a bicrossed product $H \simeq
k^{\Gamma} {}^{\tau}\#_{\sigma} kF$, where $(\sigma, \tau)$
represents the element of the group $\Opext(k^{\Gamma}, kF)$
determined by the extension \eqref{sec}.

\medbreak In view of Theorem \ref{kac}  there is an equivalence of
fusion categories $\Rep D(H) \simeq \Rep D^{\omega}(G)$, where
$\omega \in H^3(G, k^{\times})$ is the $3$-cocycle correspondig to
the class $[\sigma, \tau] \in \Opext(k^{\Gamma}, kF)$ under the
Kac exact sequence \ref{kac-es}.

\begin{lemma}\label{twisting} Suppose $H$ admits a quasitriangular structure. Then
$H$ is twist equivalent to the group algebra $kG$. \end{lemma}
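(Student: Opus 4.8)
The plan is to exploit the fact that $G$ is simple non-abelian, which drastically limits the normal subgroups of $G$, and combine this with Corollary \ref{h-triple} together with the dimension count from Remark \ref{div-f}. Suppose $H$ admits a quasitriangular structure. By Corollary \ref{h-triple}, this structure determines a triple $(K_1, K_2, B)$ satisfying conditions (a) and (b) with respect to $G$ and $\omega = \kappa(\tau, \sigma)$, and moreover $|K_1| = |K_2|$, so that the fusion subcategory $\C(K_1, K_2, B) \simeq \Rep H$ has dimension $|G|$. The key constraint is that $K_1$ and $K_2$ must be \emph{normal} subgroups of $G$; since $G$ is simple non-abelian, each of $K_1, K_2$ is either trivial or all of $G$.

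First I would rule out the case $K_1 = G$ (equivalently $K_2 = G$, by the symmetric roles and the equality $|K_1| = |K_2|$). If $K_1 = G$, then by the description of $\C(K_1, K_2, B)$ every conjugacy class of $G$ contributes a simple object $V_{(g, \pi)}$ to $\Rep H$, and in particular the representations indexed by a non-central $g$ have dimension $[G : C_G(g)] \deg \pi > 1$. By the Proposition immediately preceding Section \ref{symm}, every such dimension $[G : C_G(g)] \deg \pi$ must divide $|F|$. Since the factorization is \emph{proper}, $F$ is a proper subgroup, so $|F| < |G|$; I would derive a contradiction by exhibiting a conjugacy class (necessarily existing because $G$ is non-abelian, so $Z(G) = 1$ and $G$ has elements with large centralizer index) whose contribution forces $[G : C_G(g)]$ to exceed or fail to divide $|F|$. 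The cleanest route is to note that $K_1 = G$ would mean the index $[G : C_G(g)]$ divides $|F|$ for \emph{every} $g \in G$, and in a non-abelian simple group one can always find a $g$ with $[G:C_G(g)] \nmid |F|$ whenever $|F| < |G|$ — indeed the conjugacy class sizes of such $G$ together fill up $|G|$ and cannot all divide a proper divisor $|F|$.

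Having eliminated $K_1 = G$, the only remaining possibility consistent with $|K_1| = |K_2|$ is $K_1 = K_2 = 1$. By the remark following the dimension formula in Section \ref{nnw}, the triple $(1,1,1)$ gives $\C(1,1,1) \simeq \Rep G$. Hence $\Rep H \simeq \Rep G$ as fusion categories, and since this equivalence respects the braided structure coming from the $R$-matrix and the canonical braiding on $\Rep G = \Rep kG$, the Hopf algebra $H$ is twist equivalent to the group algebra $kG$ by the twist-versus-tensor-equivalence correspondence of \cite{scha} recorded in the Preliminaries. This completes the argument.

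The main obstacle I anticipate is the case analysis eliminating $K_1 = G$: one must be sure that in \emph{every} proper exact factorization $G = F\Gamma$ of a simple non-abelian group there genuinely exists a conjugacy class whose size does not divide $|F|$, so that the divisibility constraint of the Proposition is violated. This is where the non-abelian simplicity of $G$ does real work — it forces $Z(G) = 1$ and rules out the degenerate possibility that all nontrivial contributions are one-dimensional. I would also need to double-check the symmetric treatment of $K_1$ and $K_2$: a priori one of them might be $G$ and the other trivial, but the equality $|K_1| = |K_2|$ forbids this mixed case outright, so the only live alternatives are $(G,G)$ and $(1,1)$, which streamlines the argument considerably.
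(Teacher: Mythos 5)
Your overall strategy coincides with the paper's: use Corollary \ref{h-triple} to produce a triple $(K_1, K_2, B)$ with $|K_1| = |K_2|$, invoke simplicity of $G$ to force $K_1, K_2 \in \{1, G\}$, conclude $\Rep H \simeq \C(1,1,1) \simeq \Rep G$, and deduce twist equivalence from \cite{scha}. The genuine gap is in your elimination of the case $(K_1, K_2) = (G, G)$. Your ``cleanest route'' rests on the claim that the conjugacy class sizes of $G$, since they sum to $|G|$, ``cannot all divide a proper divisor $|F|$''; as a piece of arithmetic this inference is simply false --- a sum of divisors of $|F|$ can perfectly well equal $|G|$ (for instance $1+1+2+2+3+3 = 12$ with every summand dividing $6$). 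So this step would require an actual group-theoretic theorem about class sizes in non-abelian simple groups admitting a proper exact factorization, which you neither prove nor cite, and which would be considerably harder than the lemma itself. (You correctly anticipated this as the main obstacle, but the proposed fix does not hold up.)

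The case $(G, G)$ is excluded for a much simpler reason, already contained in condition (a) of Section \ref{nnw}, which you quote but never use: $K_1$ and $K_2$ must be normal subgroups of $G$ that \emph{centralize each other}. Taking $K_1 = K_2 = G$ would force $G$ to be abelian, contradicting the standing hypothesis in Section \ref{simple} that $G$ is simple non-abelian. This is precisely what is implicit in the paper's phrase ``since the group $G$ is simple by assumption''; compare the remark at the end of Section \ref{symm}, where the argument fails for $n = 4$ exactly because the Klein subgroup yields an \emph{abelian} $K_1$ with $(K_1, K_1, B)$ admissible, and the proof of Theorem \ref{alm-simp}, which says explicitly ``which is not abelian by assumption.'' With this one-line replacement your argument closes: the mixed cases are already killed by $|K_1| = |K_2|$, as you note, and your final step --- passing from the tensor equivalence $\Rep H \simeq \Rep G$ to twist equivalence of $H$ with $kG$ via \cite{scha} --- is correct, though the appeal to braided structures there is unnecessary, since only the equivalence of tensor categories is needed.
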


\begin{proof} Under these assumptions, $\Rep H$ is isomorphic as a
fusion category to a full fusion subcategory of $\Rep D(H) \simeq
\Rep D^{\omega}(G)$.

In view of the description of such fusion subcategories in \ref{nnw}, and since the group $G$ is
simple by assumption, the only fusion subcategory of $\Rep
D^{\omega}(G)$ of dimension $|G|$ is $\C(1, 1, 1) \simeq \Rep G$.
Therefore we must have $\Rep H \simeq \Rep G$, which implies the
lemma. \end{proof}

\begin{theorem}\label{simp} Let $H$ be a Hopf algebra fitting into an
exact sequence \eqref{sec}, where $\Gamma$ and $F$ are proper
subgroups of $G$. Then $H$ admits no quasitriangular structure.
\end{theorem}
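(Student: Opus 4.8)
The plan is to derive a contradiction from the assumption that $H$ admits a quasitriangular structure, using Lemma \ref{twisting} together with the fact that twisting preserves the lattice of quotient Hopf algebras. Suppose $H$ is quasitriangular. By Lemma \ref{twisting}, $H$ is twist equivalent to the group algebra $kG$, so that $\Rep H \simeq \Rep G$ and there is a bijective correspondence between quotient Hopf algebras of $H$ and those of $kG$ (as recorded in the Preliminaries, Remark \ref{tens-subc} and the subsequent discussion of twisting). The point is that the abelian extension \eqref{sec} equips $H$ with a canonical quotient Hopf algebra $H \onto kF$, and I want to see what this forces on the side of $kG$.

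First I would recall that quotient Hopf algebras of $kG$ are exactly the group algebras $kQ$, where $Q$ is a quotient group of $G$; equivalently, they correspond to normal subgroups $N \trianglelefteq G$ via $kG \onto k(G/N)$. Since $G$ is simple non-abelian, its only normal subgroups are $1$ and $G$, so the only quotient Hopf algebras of $kG$ are $k$ and $kG$ itself. Next, applying the quotient correspondence induced by the twist, the surjection $H \onto kF$ must match a quotient Hopf algebra of $kG$ of the same dimension $|F|$. Because $F$ is a proper subgroup, $1 < |F| < |G|$, so $|F|$ is neither $1$ nor $|G|$; hence $kG$ has no quotient Hopf algebra of dimension $|F|$. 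This is the desired contradiction.

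The main subtlety to get right is the precise statement that twisting induces a dimension-preserving bijection on quotient Hopf algebras, and that the quotient $H \onto kF$ is genuinely of dimension $|F|$ with $F$ proper. The dimension-preservation follows from the discussion after Remark \ref{tens-subc}: if $\pi \colon H \to L$ is a Hopf algebra quotient and $J$ is a twist realizing $H^J \simeq kG$, then $(\pi \otimes \pi)(J)$ is a twist for $L$ and $\pi$ becomes a quotient map $H^J \to L^{(\pi\otimes\pi)(J)}$, with $\dim L^{(\pi\otimes\pi)(J)} = \dim L$ since twisting does not change the underlying algebra or its dimension. Thus the quotient $H \onto kF$ of dimension $|F|$ transports to a quotient of $kG$ of dimension $|F|$. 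I expect this bookkeeping — tracking that the transported quotient has dimension exactly $|F|$ and invoking the simplicity of $G$ to rule out any such quotient of $kG$ — to be the only real step; the rest is the immediate contradiction with $1 < |F| < |G|$.

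\begin{proof} Suppose on the contrary that $H$ admits a quasitriangular structure. By Lemma \ref{twisting}, $H$ is twist equivalent to the group algebra $kG$; fix a twist $J$ with $H^J \simeq kG$. Since twisting induces a bijective correspondence between quotient Hopf algebras of $H$ and those of $kG$ that preserves dimension (see the discussion following Remark \ref{tens-subc}), and since the extension \eqref{sec} provides a quotient Hopf algebra $H \to kF$ of dimension $|F|$, there is a corresponding quotient Hopf algebra of $kG$ of dimension $|F|$. Every quotient Hopf algebra of $kG$ is of the form $k(G/N)$ for some normal subgroup $N \trianglelefteq G$. As $G$ is simple non-abelian, the only possibilities are $N = 1$, giving dimension $|G|$, and $N = G$, giving dimension $1$. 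But $F$ is a proper subgroup, so $1 < |F| < |G|$, a contradiction. Hence $H$ admits no quasitriangular structure. \end{proof}
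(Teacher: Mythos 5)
Your proof is correct and follows essentially the same route as the paper: invoke Lemma \ref{twisting} to get $H$ twist equivalent to $kG$, transport the canonical quotient $H \to kF$ along the twist to a quotient Hopf algebra of $kG$ of dimension $|F|$ (necessarily a group algebra $k\overline{G}$ for a quotient group $\overline{G}$ of $G$), and derive a contradiction from $1 < |F| < |G|$ and the simplicity of $G$. Your extra bookkeeping about dimension preservation under the correspondence $L \mapsto L^{(\pi\otimes\pi)(J)}$ is a welcome explicit justification of what the paper states more briefly as ``twisting preserves quotients.''
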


\begin{proof} Suppose on the contrary that $H$ admits a
quasitriangular structure. Then $H$ is twist equivalent to $kG$,
by Lemma \ref{twisting}. On the other hand, $H$ has a Hopf algebra
quotient $H \to kF$ that corresponds, because twisting preserves
quotients, to a Hopf algebra quotient $kG \to L$, with $1 < \dim L
= |F| < |G|$. In particular, $L \simeq k\overline G$, where
$\overline G$ is a quotient of the group $G$. This is a
contradiction, since the group $G$ is simple by assumption. Thus
$H$ admits no quasitriangular structure, as claimed. \end{proof}

\begin{remark} We have shown, as part of the proof of Proposition \ref{simetrico} and Theorem \ref{simp}, that the bicrossed product Hopf algebra $H$ is not twist equivalent to the group algebra of $G$. For the case where $G = \mathbb S_n$, $n = p+1$ or $p+2$, $p > 3$ a prime number, and $H$ corresponds to the exact factorization considered in Section \ref{symm}, this fact follows from the main result of \cite{collins} that says that $H$ is not isomorphic as an algebra to any group algebra. The analogous result is true for a bismash product (split extension) Hopf algebra associated to the groups ${\rm PGL}_2(q)$, $q \neq 2, 3$, as shown in \cite{clarke}. \end{remark}

\section{Hopf algebras associated to almost simple groups}\label{almost-simple}

Let $G$ be a finite group. Recall that $G$ is called \emph{almost simple} if it is isomorphic to a group $\tilde G$ (that we shall identify with $G$ in what follows) such that $N \leq \tilde G \leq \Aut N$, for some non-abelian finite simple group $N$.
In this case, the \emph{socle} of $G$, that is, the subgroup generated by the minimal normal subgroups of $G$, coincides with $N$.
Furthermore, a finite group $G$ is almost simple if and only if its socle is a simple non-abelian group.

In particular, we have the following

\begin{lemma}\label{normal-sub} Let $N \leq G \leq \Aut (N)$, where $N$ is simple non-abelian. Then for every normal subgroup $1 \neq K \trianglelefteq G$, we have $N \leq K$.\qed \end{lemma}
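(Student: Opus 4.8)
The plan is to exploit the fact that $N$, identified with $\operatorname{Inn}(N) \leq \Aut(N)$, is a normal subgroup of $\Aut(N)$ and hence of $G$, together with the simplicity of $N$. First I would recall that since $N$ is simple non-abelian its center is trivial, so the natural map $N \to \Aut(N)$, $n \mapsto \iota_n$ (conjugation by $n$), is injective and identifies $N$ with $\operatorname{Inn}(N)$; moreover $\operatorname{Inn}(N) \trianglelefteq \Aut(N)$. Since $N \leq G \leq \Aut(N)$, it follows that $N \trianglelefteq G$.

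Now let $K \trianglelefteq G$ with $K \neq 1$. Because both $K$ and $N$ are normal in $G$, their intersection $K \cap N$ is normal in $G$, and in particular in $N$. As $N$ is simple, either $K \cap N = N$ or $K \cap N = 1$. In the first case $N \leq K$ and we are done, so the whole content of the lemma is to rule out $K \cap N = 1$.

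Assume $K \cap N = 1$. The standard commutator argument for two normal subgroups meeting trivially shows that $K$ and $N$ commute elementwise: for $k \in K$ and $n \in N$, normality of $N$ gives $[k,n] = (k^{-1}n^{-1}k)n \in N$, while normality of $K$ gives $[k,n] = k^{-1}(n^{-1}kn) \in K$, so $[k,n] \in K \cap N = 1$. Hence $K$ centralizes $N = \operatorname{Inn}(N)$ inside $\Aut(N)$. The key point is that this centralizer is trivial: any $\phi \in \Aut(N)$ commuting with all $\iota_n$ satisfies $\iota_{\phi(n)} = \phi\, \iota_n\, \phi^{-1} = \iota_n$, so $\phi(n)n^{-1} \in Z(N) = 1$ for every $n$, whence $\phi = \id$. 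Therefore $K \subseteq C_{\Aut(N)}(\operatorname{Inn}(N)) = 1$, contradicting $K \neq 1$. This contradiction forces $K \cap N = N$, that is, $N \leq K$, completing the proof. The only mildly delicate step is the computation of this centralizer, which rests precisely on $N$ having trivial center.
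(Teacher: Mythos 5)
Your proof is correct, and every step checks out: $N\trianglelefteq G$ because $\operatorname{Inn}(N)\trianglelefteq \Aut(N)$; the commutator trick for two normal subgroups with trivial intersection is applied properly; and the centralizer computation $C_{\Aut(N)}(\operatorname{Inn}(N))=1$ is exactly where $Z(N)=1$ enters. The paper itself gives no proof at all --- the lemma is stated with a \qed, deduced from the preceding assertion that the socle of an almost simple group coincides with $N$, equivalently that $N$ is the unique minimal normal subgroup of $G$ (so any $K\neq 1$ contains a minimal normal subgroup, which must be $N$). Your argument is precisely the standard proof hiding behind that socle statement: if $K\cap N=1$ then $K$ would centralize $\operatorname{Inn}(N)$ in $\Aut(N)$, which is impossible since that centralizer is trivial. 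So you have not taken a different route so much as supplied, self-contained and from first principles, the details the paper delegates to a quoted fact about socles; this makes your write-up more elementary and verifiable, at the cost of re-deriving a well-known structural property rather than citing it.
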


\medbreak Examples of almost simple groups are the non-abelian simple groups. Also,  the symmetric groups $\mathbb S_n$, are almost simple, for all $n \geq 5$. Indeed, $\mathbb S_n \simeq \Aut(\mathbb A_n)$, for all $n \neq 6$, and $\Aut(\mathbb A_6)/\mathbb S_6 \simeq \mathbb Z_2$.

\medbreak In view of the Classification Theorem of finite simple groups, the simple non-abelian group $N$ is isomorphic to exactly one of the following groups:

(i) the alternating group $\mathbb A_n$, $n \geq 5$,

(ii) a group of Lie type, or

(iii) one of the twenty-six sporadic simple groups.

\medbreak These groups, together with their orders,  are listed,
for instance, in \cite{GLS}. We shall make use of the order of the
outer automorphism groups from \cite[2.1]{LPS-1}.

\begin{theorem}\label{alm-simp} Let $G$ be an almost simple group and let $G = F\Gamma$ be a proper exact factorization of $G$. Let $H$ be a Hopf algebra fitting into an
exact sequence \eqref{sec}. Then $H$ admits no quasitriangular structure.
\end{theorem}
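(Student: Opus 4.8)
The plan is to mirror the argument for simple groups (Lemma \ref{twisting} and Theorem \ref{simp}), using Lemma \ref{normal-sub} to supply the extra group-theoretic input required when $G$ is only almost simple. Suppose, towards a contradiction, that $H$ admits a quasitriangular structure. Since $\dim H = |\Gamma|\,|F| = |G|$, Corollary \ref{h-triple} provides a triple $(K_1, K_2, B)$ as in \ref{nnw} with $K_1, K_2$ normal subgroups of $G$ that centralize each other and $|K_1| = |K_2|$, and with $\Rep H \simeq \C(K_1, K_2, B)$. I would first argue that $K_1 = K_2 = 1$: if $K_1 \neq 1$ then $N \leq K_1$ by Lemma \ref{normal-sub}, and since $|K_2| = |K_1| \geq |N| > 1$ we likewise get $N \leq K_2$; but then every element of $N \subseteq K_1$ commutes with every element of $N \subseteq K_2$, forcing $N$ to be abelian, contrary to hypothesis. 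Hence $K_1 = K_2 = 1$ and $\Rep H \simeq \C(1,1,1) \simeq \Rep G$, so that $H$ is twist equivalent to the group algebra $kG$.

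Because a twist alters only the coproduct, $H \simeq kG$ as algebras; consequently the multiset of dimensions of the irreducible $H$-modules coincides with the multiset of degrees of the irreducible characters of $G$. I would now read off two numerical constraints. First, by Remark \ref{div-f} every irreducible $H$-module has dimension dividing $|F|$, so every irreducible character degree of $G$ divides $|F|$. Second, the quotient Hopf algebra $H \onto kF$ coming from \eqref{sec} corresponds, since twisting preserves Hopf algebra quotients, to a quotient Hopf algebra of $kG$ of dimension $|F|$, necessarily the group algebra $k\overline{G}$ of a quotient group $\overline{G} = G/M$ with $|\overline{G}| = |F|$. As the factorization is proper, $1 < |F| < |G|$, so $M$ is a nontrivial proper normal subgroup of $G$; by Lemma \ref{normal-sub}, $N \leq M$, and therefore $|F| = [G : M]$ divides $[G : N]$.

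Combining the two constraints, every irreducible character degree of $G$ divides $[G:N]$. Applying Clifford's theorem to the normal subgroup $N \trianglelefteq G$, if $\psi$ is an irreducible character of $N$ and $\chi$ is an irreducible character of $G$ lying above it, then $\psi(1)$ divides $\chi(1)$, whence $\psi(1)$ divides $[G:N]$ as well. Since $G \leq \Aut N$ induces an embedding $G/N \into \Out N$, we have $[G:N] \leq |\Out N|$; thus every irreducible character degree of $N$ is at most $|\Out N|$.

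The contradiction, and what I expect to be the main obstacle, is the purely group-theoretic fact that every non-abelian finite simple group $N$ possesses an irreducible character of degree strictly greater than $|\Out N|$. This is where the classification of finite simple groups enters: one runs through the alternating groups, the groups of Lie type, and the sporadic groups, comparing the smallest nontrivial character degree of $N$ with the order of $\Out N$ recorded in \cite[2.1]{LPS-1}. For $G = \mathbb{S}_n$ the comparison is elementary --- the socle $\mathbb{A}_n$ already carries the standard character of degree $n-1$, while $|\Out \mathbb{A}_n| \leq 4$ --- which is exactly why the symmetric case (Proposition \ref{sym-2}) can be settled without invoking the classification.
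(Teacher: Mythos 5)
Your proposal is correct and takes essentially the same route as the paper's proof: you use the NNW triple from Corollary \ref{h-triple} together with Lemma \ref{normal-sub} to force $\Rep H \simeq \C(1,1,1) \simeq \Rep G$, then play the divisibility constraint of Remark \ref{div-f} against the quotient $H \onto kF$, whose kernel on the group side must contain $N$, and close with the classification-based case check comparing character degrees of $N$ with $|\Out(N)|$. Your only departures are expository --- you spell out the Clifford-theoretic descent of the degree condition from $G$ to $N$ and the bound $[G:N] \leq |\Out(N)|$, which the paper asserts tersely, and you phrase the final contradiction as an inequality on degrees rather than the paper's divisibility condition (with the Steinberg character supplying the Lie-type case in both versions).
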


\begin{proof} By Theorem \ref{kac} we have an
equivalence of fusion categories $\Rep D(H) \simeq \Rep
D^{\omega}(G)$, where $\omega \in H^3(G, k^{\times})$ is the
$3$-cocycle correspondig to the class $[\sigma, \tau] \in
\Opext(k^{\Gamma}, kF)$ under the Kac exact sequence \ref{kac-es}.

Suppose on the contrary that $H$ admits a quasitriangular structure. Then $\Rep H$ is isomorphic as a
fusion category to a full fusion subcategory of $\Rep D(H) \simeq
\Rep D^{\omega}(G)$.

In view of the description of such subcategories in \ref{nnw}, and
since every nontrivial normal subgroup of $G$ contains $N$, which
is not abelian by assumption, the only fusion subcategory of $\Rep
D^{\omega}(G)$ of dimension $|G|$ is $\C(1, 1, 1) \simeq \Rep G$.
Therefore we must have $\Rep H \simeq \Rep G$, which implies $H$
is twist equivalent to the group algebra $kG$. In particular $H$
is isomorphic to $kG$ as an algebra and therefore they have the
same irreducible degrees.

\medbreak The (proper) quotient Hopf algebra $H \to kF$ corresponds to a surjective group homomorphism $f: G \to T$ for some group $T$ such that $kT$ is twist equivalent to $kF$, so that $|T| = |F|$. Thus, in view of Remark \ref{div-f}, we get:
\begin{equation}\label{condition}\dim V \, \text{ divides } \; |F| = |T|, \, \text{ for all irreducible representation } V \text{ of } G.\end{equation}

Letting  $L \subseteq G$ be the kernel of $f$, and since $L$ is a nontrivial normal subgroup of $G$, Lemma \ref{normal-sub} implies that $N \subseteq L$. In particular, $T$ is isomorphic to a subgroup of $G/N \leq \Out(N)$.

Since the group $G$ is an extension of $N$, $N$ being a normal subgroup, then the irreducible representations of $N$ should also satisfy Condition \eqref{condition}.

\medbreak We shall show next that such a subgroup $T$ cannot exist, thus proving the theorem. The proof will go case by case, considering the different possibilities for the simple non-abelian group $N$ listed above.

\medbreak \emph{Case (i). $N \simeq \mathbb A_n$, $n \geq 5$.} If $n \neq 6$, then $G = \mathbb A_n$ or $\mathbb S_n$ and $\Out(N) = 2$, hence $|T| = 1$ or $2$. Combined with Condition \eqref{condition}, this implies a contradiction. If $n = 6$, then $\Out(N) \simeq \mathbb Z_2 \times \mathbb Z_2$, which gives similarly a contradiction ($\mathbb A_6$, hence also $G$, has irreducible representations of degree $> 4$).

\medbreak \emph{Case (ii). $N$ is a group of Lie type over a
finite field of characteristic $p$.} In this case, the Steinberg
representation is an irreducible representation of $N$ of degree
equal to the largest power of $p$ dividing the order of the group
\cite{carter, humphreys}.  That this representation does not
satisfy Condition \eqref{condition},  follows  by inspection of
the order of the group $\Out(N)$ in \cite[Table 2.1 A and
B]{LPS-1}.

\medbreak \emph{Case (iii). $N$ is a sporadic simple group.} In this case, $|\Out (N)| = 1$ or $2$. The same argument as in Case (i) discards this possibility. \end{proof}

As a consequence, we get the following statement,
generalizing Proposition \ref{simetrico}.

\begin{proposition}\label{sym-2} Let $H$ be a Hopf algebra fitting into an
exact sequence \eqref{sec}, where $\Gamma$ and $F$ are proper
subgroups of $\mathbb S_n$, $n \geq 5$. Then $H$ admits no
quasitriangular structure. \qed
\end{proposition}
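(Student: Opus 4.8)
The plan is to recognize Proposition \ref{sym-2} as the instance $G = \mathbb{S}_n$ of Theorem \ref{alm-simp}: for $n \geq 5$ the group $\mathbb{S}_n$ is almost simple, with socle $\mathbb{A}_n$, so the conclusion is immediate. Since the whole point of isolating this case is that it bypasses the classification of finite simple groups, I would instead rerun the argument of Theorem \ref{alm-simp} directly, invoking only the elementary facts that the proper normal subgroups of $\mathbb{S}_n$ are $1$ and $\mathbb{A}_n$, that $\mathbb{A}_n$ is non-abelian, and that $\mathbb{S}_n$ carries an irreducible representation of degree $n-1 \geq 4$.

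First I would show that a quasitriangular structure on $H$ would force $H$ to be twist equivalent to $k\mathbb{S}_n$. By Corollary \ref{cor-qt} and the classification recalled in \ref{nnw}, such a structure yields (Corollary \ref{h-triple}) a triple $(K_1, K_2, B)$ with $K_1, K_2$ normal subgroups of $\mathbb{S}_n$ centralizing one another and $|K_1| = |K_2|$. If some $K_i \neq 1$ then $\mathbb{A}_n \subseteq K_i$; were both nontrivial we would have $\mathbb{A}_n \subseteq K_1 \cap K_2$, and the centralizing hypothesis would then force every element of $\mathbb{A}_n$ to commute with every element of $\mathbb{A}_n$, i.e.\ $\mathbb{A}_n$ abelian, which is false for $n \geq 5$. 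Hence $K_1 = K_2 = 1$ and $\Rep H \simeq \C(1,1,1) \simeq \Rep \mathbb{S}_n$, so that $H$ is a twisting of $k\mathbb{S}_n$; in particular $H$ and $k\mathbb{S}_n$ are isomorphic as algebras and share the same irreducible degrees.

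Next I would exploit the proper quotient $H \to kF$. Because twisting preserves Hopf algebra quotients, this corresponds to a quotient $k\mathbb{S}_n \to L$ with $L$ twist equivalent to $kF$ and $\dim L = |F|$. Every Hopf quotient of the cocommutative algebra $k\mathbb{S}_n$ is a group algebra $kT$ for a quotient group $T$ of $\mathbb{S}_n$, so $|T| = |F|$; writing $f : \mathbb{S}_n \to T$ for the associated surjection, properness of $F$ gives $|T| = |F| < n!$, so $\ker f$ is a nontrivial normal subgroup and therefore contains $\mathbb{A}_n$. Thus $T$ is a quotient of $\mathbb{S}_n/\mathbb{A}_n \simeq \mathbb{Z}_2$ and $|F| = |T| \leq 2$. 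On the other hand, since $H \cong k\mathbb{S}_n$ as an algebra, Remark \ref{div-f} forces every irreducible degree of $\mathbb{S}_n$ to divide $|F|$, which is exactly Condition \eqref{condition}; applying it to the standard representation, of degree $n-1 \geq 4$, produces the required contradiction.

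I do not expect a serious obstacle here, as the argument is essentially bookkeeping on top of the machinery already assembled (Theorem \ref{kac} and the results of \ref{nnw}). The only points deserving care are the centralizing step, where one must be sure that two normal subgroups each containing the non-abelian $\mathbb{A}_n$ cannot centralize one another, and the verification that $\ker f$ is genuinely nontrivial, which rests on properness of $F$. In contrast with the general almost simple case, neither the order of $\Out(\mathbb{A}_n)$ nor the Steinberg representation enters: the normal subgroup lattice of $\mathbb{S}_n$ is elementary and an irreducible of degree exceeding $2$ is exhibited by hand, which is precisely why the classification of finite simple groups can be dispensed with for symmetric groups.
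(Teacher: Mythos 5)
Your proof is correct and takes essentially the same route as the paper: Proposition \ref{sym-2} is obtained there as an immediate consequence of Theorem \ref{alm-simp} (with $G = \mathbb S_n$ almost simple with socle $\mathbb A_n$), and your argument is exactly the proof of that theorem specialized to $\mathbb S_n$, where the case analysis collapses to Case (i). Your inlining --- using the normal subgroup lattice $1 \trianglelefteq \mathbb A_n \trianglelefteq \mathbb S_n$ and the standard $(n-1)$-dimensional irreducible representation in place of the $\Out(N)$ tables and the Steinberg representation --- simply makes explicit the paper's own remark that the classification of finite simple groups is not needed in the symmetric group case.
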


\bibliographystyle{amsalpha}

\end{document}